\documentclass[a4paper, 12pt]{amsart}
\usepackage[utf8]{inputenc}
\usepackage[english]{babel}
\usepackage[T1]{fontenc}
\usepackage{amsmath, amssymb, amsfonts}
\usepackage{enumerate}
\usepackage{graphicx}
\usepackage[hmargin=2.5cm, vmargin=2.5cm]{geometry}
\usepackage{tikz}
\usepackage{comment}
\usepackage{hyperref}
\usepackage{stmaryrd}
\usepackage{xfrac}

\title{Patterns in trees and quantum automorphism groups}
\author{Lucas Alger}
\email{lucas.alger-gaiqui@math.univ-toulouse.fr}
\address{Laboratoire de Math\'ematiques d'Orsay, Univ. Paris-Sud, CNRS, Universit\'e Paris-Saclay, 91405 Orsay, France}
\author{Julie Capron}
\email{julie.capron.hbll@gmail.com}
\address{Laboratoire de Math\'ematiques d'Orsay, Univ. Paris-Sud, CNRS, Universit\'e Paris-Saclay, 91405 Orsay, France}
\author{Félix de la Salle}
\email{felix.de-la-salle@orange.fr}
\address{Laboratoire de Math\'ematiques d'Orsay, Univ. Paris-Sud, CNRS, Universit\'e Paris-Saclay, 91405 Orsay, France}
\keywords{Graphs, Quantum information theory, Compact quantum groups}
\subjclass[2010]{68R10, 20G42}
\date{}

\theoremstyle{plain}
\newtheorem{thm}{Theorem}[section]
\newtheorem{prop}[thm]{Proposition}
\newtheorem{cor}[thm]{Corollary}
\newtheorem{lem}[thm]{Lemma}

\theoremstyle{definition}
\newtheorem{de}[thm]{Definition}

\theoremstyle{remark}
\newtheorem{rem}[thm]{Remark}

\DeclareMathOperator{\QAut}{Aut^{+}}

\newcommand{\E}{\mathbb{E}}

\newcommand{\p}{\mathbb{P}}

\begin{document}

\begin{abstract}
We prove that given a fixed finite tree $P$, almost all trees contain $P$ as a subtree. Moreover, the inclusion can be made so that it induces an embedding of the corresponding (quantum) automorphism groups, thereby providing generic properties of the latter.
\end{abstract}

\maketitle

\section{Introduction}

In the seminal paper \cite{erdos1963asymmetric}, P. Erd\"os and A. R{\'e}nyi proved -- among many other results -- that almost all trees contain what they call a \emph{cherry}, that is to say a line of three vertices attached by its middle to the remainder of the tree. A direct consequence is that the non-trivial automorphism of the cherry consisting in exchanging the endpoints extends to an automorphism of the whole tree, therefore proving that almost all trees have non-trivial automorphism group. This is in sharp contrast with the fact that almost all graphs have trivial automorphism group (and even trivial quantum automorphism group, see \cite[Thm 3.15]{lupini2017nonlocal}).

The notion of quantum graph isomorphism was introduced in \cite{atserias2018quantum} the setting of quantum information theory to understand the difference between classical and quantum strategies for the so-called \emph{graph isomorphism game}. Then, M. Lupini, L. Man\v{c}inska and D. Roberson made in \cite{lupini2017nonlocal} the link with the notion of quantum automorphism group of a graph earlier introduced by J. Bichon and T. Banica (see \cite{bichon2003quantum} and \cite{banica2005quantum}). This opened new perspectives for the study of these objects, and in particular for fundamental questions concerning the mere possibility for a graph of having quantum automorphisms.

The starting point of this work is the article \cite{junk2020almost}, where L. Junk and S. Schmidt and M. Weber improved the result of Erd\"os-R{\'e}nyi to prove that almost all trees contain two disjoint cherries. A consequence of this fact is that, by a criterion of S. Schmidt \cite[Thm 2.2]{schmidt2020quantum}, the corresponding quantum automorphism group is not only non-trivial, but even non-classical. Based on this, it is natural to wonder whether one can similarly prove structure results for ``almost all'' quantum automorphism groups of trees, and this is what we will do.

More precisely, our main result (Theorem \ref{thm:main}) states that given a finite tree $P$ (called the \emph{pattern}), almost all trees contain a copy of $P$ attached through one of its vertices. Up to replacing $P$ by its rootification, this means that one can embed the (quantum) automorphism group of $P$ inside the one of the tree, thereby providing several almost sure properties of quantum automorphism groups of trees.

Let us briefly outline the contents and organization of the paper. Section \ref{sec:preliminaries} contains preliminary material concerning finite graphs and the notion of pattern which is our main tool. Then, the main result is proven in Section \ref{sec:main}. The text ends with the short Section \ref{sec:applications} giving some consequences at the level of quantum automorphism groups.

\section*{Acknowledgments}

This work was done as part of the author's bachelor thesis under the supervision of Amaury Freslon. The authors would like to thank him for countless helpful comments, feedbacks and benevolence for this project.

\section{Preliminaries}\label{sec:preliminaries}

Let us start by recalling a few definitions in order to fix terminology and notations.
\begin{itemize}
\item In this work, the word ``graph'' will always mean ``finite non-oriented graph without loops'', that is to say the data of a set $V$ of vertices and a subset $E\subset V\times V$ not intersecting the diagonal such that $(x, y)\in E$ if and only if $(y, x)\in E$.
\item Two graphs $G$ = $(V, E)$ and $G'$ = $(V', E')$ are said to be \emph{isomorphic} if there exists a bijection $\varphi$ from $V$ to $V'$ such that
\begin{equation*}
\forall\ u,v\in V\ :\ \{u,v\}\in E \Longleftrightarrow \{\varphi(u), \varphi(v)\}\in E'.
\end{equation*}
Such a map $\varphi$ is called a \emph{graph isomorphism}.
\item A graph $G$ is a \emph{tree} if it is connected and does not contain any cycle. If we further fix some specific vertex $v$ of $G$, then $(v, G)$ will be called a \emph{rooted tree}.
\item If $(v,T)$ and $(v',T')$ are two rooted trees, an \emph{isomorphism of rooted trees} is a graph isomorphism that respects the roots i.e. it is a graph isomorphism $\varphi$ from $T$ to $T'$ such that
\begin{equation*}
\varphi (v)=v'.
\end{equation*}
Two rooted trees are said to be \emph{isomorphic} if such a map exists.
\item Let us also recall \emph{Cayley's formula}: The number of possible labelled trees with $n$ vertices is $n^{n-2}$. See \cite{proofsfromthebook} for various proofs.
\end{itemize}

Let us now introduce the main tool of this paper, \emph{patterns}.

\begin{de}
Let $p$ be an integer and let $(v, P)$ be a rooted tree with $p+1$ vertices. Given a tree $T$ with $n\geq p+1$ vertices and $p+1$ distinct vertices
\begin{equation*}
v_{j}, v_{i_{1}}, \cdots, v_{i_{p}}\in V(T),
\end{equation*}
we will say that $(v_{j}, \{v_{j},v_{i_{1}},...,v_{i_{p}}\})$ is a $(v,P)-$\emph{pattern} in $T$ if the subgraph $\widetilde{T}$ of $T$ obtained by restricting to the set $\{v_j,v_{i_1},...,v_{i_p}\}$ satisfies :
\begin{enumerate}
\item There is an isomorphism of rooted trees $\varphi: (v_{j},\widetilde{T})\to (v, P)$;
\item $\mathrm{deg}_{T}(v_{j}) = \mathrm{deg}_{\widetilde T}(v_{j}) + 1$;
\item $\forall\ k \in \llbracket 1, p\rrbracket, \mathrm{deg}_{T}(v_{i_{k}}) = \mathrm{deg}_{\widetilde T}(v_{i_{k}})$.
\end{enumerate}
\end{de}

Let us make a few comments on that definition.

\begin{rem}
\begin{enumerate}
\item We will often write $(v_{j}, v_{i_{1}},...,v_{i_{p}})$ for a pattern instead of the expression given in the definition but it is really important to bear in mind the definition as it sets the first vertex apart in the same way as $v$ is in $(v,P)$ and thus make the combinatorics a bit more straightforward.
\item What we mainly care about is finding the form of the pattern, we don't mind the way the pattern is labelled. This justifies the motivation behind finding isomorphic trees, but as specified in the definition, not all isomorphisms can be convenient as we really want to identify the vertex that connects the pattern to the rest of the tree. Hence the condition over $\varphi$ and condition 2.
\item The last condition is natural as it translates the fact that we are working in the outside part of $T$ or rather that there are no other vertices of $T$ attached to one $v_{i_{k}}$ other than the ones connected in $\widetilde{T}$.
\end{enumerate}
\end{rem}

The proof of our main result relies on the study of a specific random variable on trees which we now define.

\begin{de}
Let $(v, P)$ be a rooted tree with $p + 1$ vertices, and let $n\geq p+1$ be a natural number. For all sets of $p+1$ distinct indices $(j, i_{1}, \cdots, i_{p}) \in \llbracket 1, n\rrbracket^{p+1}$, we define the following function on the set of all trees with $n$ vertices :
\begin{equation*}
\gamma_{j,i_1,...,i_p}(T) =
\begin{cases}
1 &\text{if $(v_{j}, v_{i_{1}}, \cdots, v_{i_{p}})$ is a $(v, P)$-pattern}\\
0 &\text{else}
\end{cases}
\end{equation*}
We also denote by $P_{n}(T)$ the number of $(v, P)$-patterns in $T$, which is given by the formula
\begin{equation*}
P_{n}(T) = \sum_{j=1}^{n}\sum\limits_{\underset{i_{1} \neq j}{i_{1}=1}}^{n}\sum\limits_{\underset{i_{2} \neq j}{i_{2}=1}}^{i_{1}-1}\cdots\sum\limits_{\underset{i_{p} \neq j}{i_{p}=1}}^{i_{p-1}-1}\gamma_{j, i_{1}, \cdots, i_{p}}(T).
\end{equation*}
\end{de}

\begin{rem}
\begin{enumerate}
\item Once the root $v_{j}$ of a pattern is fixed, every permutation of the indices $i_{1}, \cdots, i_{p}$ describes the same pattern alongside $v_{j}$. Thus we need to choose an arbitrary order for the indices $i_{1}, \cdots, i_{p}$. For the sake of clarity, we chose the decreasing order, hence the boundaries of the sums in the expression of $P_{n}$.
\item If we equip the set of all trees with $n$ vertices with the uniform probability measure $\p_n$, then $\gamma$ and $P_{n}$ become random variables. 
\end{enumerate}
\end{rem}

Before turning to the proof of our main result, let us carry out an elementary computation which will be useful later on.

\begin{lem}\label{lem:number_patterns}
Given the $p+1$ indices $j,i_1,...,i_p$, the number of rooted trees (j,T) with vertices $\{j,i_1,...,i_p\}$ isomorphic to $(v,P)$ is  
$$\frac{p!}{\#\mathrm{Aut}(v, P)}$$
where $\mathrm{Aut}(v, P)$ denotes the subgroup of $\mathrm{Aut}(P)$ consisting in graph automorphisms fixing $v$.
\end{lem}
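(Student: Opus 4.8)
The plan is to count, in two different ways, the set of pairs $(\varphi, T)$ where $T$ is a rooted tree with vertex set $\{j, i_1, \dots, i_p\}$ and $\varphi \colon (j, T) \to (v, P)$ is an isomorphism of rooted trees. On the one hand, such an isomorphism is a bijection from $\{j, i_1, \dots, i_p\}$ to $V(P)$ sending $j$ to $v$; there are exactly $p!$ such bijections (the root image is forced, and the other $p$ vertices can go anywhere). Each such bijection determines a unique tree structure on $\{j, i_1, \dots, i_p\}$ making it an isomorphism, namely the pullback of the edge set of $P$. So the set of pairs has cardinality exactly $p!$.

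On the other hand, I would group the pairs according to the tree $T$ they involve. A tree $T$ in our list contributes a nonzero number of pairs if and only if $(j, T)$ is isomorphic to $(v, P)$, and in that case the number of isomorphisms $(j, T) \to (v, P)$ is, by the standard orbit-type argument, equal to $\# \mathrm{Aut}(v, P)$: fixing one isomorphism $\varphi_0$, every other is of the form $\psi \circ \varphi_0$ with $\psi \in \mathrm{Aut}(v, P)$, and this correspondence is a bijection. Hence each tree isomorphic to $(v, P)$ is counted exactly $\#\mathrm{Aut}(v, P)$ times, and the number $N$ of such trees satisfies $N \cdot \#\mathrm{Aut}(v, P) = p!$, giving $N = p! / \#\mathrm{Aut}(v, P)$.

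I would also note at the start that $p!/\#\mathrm{Aut}(v,P)$ is an integer, since $\mathrm{Aut}(v,P)$ is the stabilizer of $v$ inside $\mathrm{Aut}(P)$ and therefore acts freely on the relevant set of labellings; this is subsumed in the double count but worth flagging. The only mild subtlety — and the one point that deserves care rather than difficulty — is the claim that a root-preserving bijection $\varphi \colon \{j, i_1, \dots, i_p\} \to V(P)$ gives rise to a unique tree structure on the source: one transports the edges of $P$ back via $\varphi^{-1}$, obtains a graph that is automatically connected and acyclic (being isomorphic as a graph to the tree $P$), and $\varphi$ is then tautologically a rooted-tree isomorphism; conversely any $T$ for which $\varphi$ is such an isomorphism must have exactly this edge set. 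So the map (pair) $\mapsto$ (underlying bijection) is a bijection from our set of pairs onto the set of $p!$ root-preserving bijections, which is the whole content of the first count. No genuine obstacle is expected here; the lemma is purely a bookkeeping statement, and the main thing is to phrase the double counting cleanly.
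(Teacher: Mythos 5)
Your proof is correct and is essentially the same argument as the paper's: the paper sets up an explicit action of $\mathfrak{S}_p$ on labelled rooted trees and applies orbit--stabilizer with $\mathrm{Stab}(v,P)=\mathrm{Aut}(v,P)$, while you unwind that same computation as a double count of pairs (isomorphism, tree), the fibers of size $\#\mathrm{Aut}(v,P)$ playing the role of the stabilizer cosets. No gap; the two write-ups differ only in presentation.
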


This lemma will explain the apparition of this constant as a factor in all the expressions we will establish later.

\begin{proof}
Let $X$ be the set of all possible rooted trees with vertices labelled by $j, i_{1}, \cdots, i_{p}$ and root $j$. Let the symmetric group $\mathfrak{S}_{p}$ operate on $X$ by setting, for all $\sigma\in \mathfrak{S}_{p}$ and $T = (\{j,i_{1}, \cdots, i_{p}\}, E)\in X$ :
\begin{equation*}
\sigma\cdot T := (\{j, i_{1}, \cdots, i_{p}\}, E')
\end{equation*}
with
\begin{equation*}
E':=\{\{i_{\sigma(k)},i_{\sigma(k')}\}\ |\ \{i_k,i_{k'}\}\in E\}\sqcup\{\{j,i_{\sigma(k)}\}\ |\ \{j, i_{k}\}\in E\}.
\end{equation*}
    
We can see $P$ as an element of $X$ by labelling the root $v$ by $j$ and fixing an arbitrary labelling of the other vertices. Then, its orbit $\omega(P)$ under the action is exactly the set of all rooted trees $(j, T)$ isomorphic to $(v,P)$. Moreover, we have a bijection between $\omega(P)$ and $\sfrac{\mathfrak S_{p}}{\mathrm{Stab}(v, P)}$. Since by definition, we have $\text{Stab}(v,P)=\mathrm{Aut}(v, P)$ it follows that the set of $P$-patterns is in bijection with $\sfrac{\mathfrak{S}_{p}}{\mathrm{Aut}(v, P)}$. The result then follows by considering the cardinals of these sets.
\end{proof}

\section{The main result}\label{sec:main}

We will now state and prove the main result of this work, showing that any fixed pattern appears almost surely in a random tree. Here is a more precise statement.

\begin{thm}\label{thm:main}
Let $P$ be a tree with $p+1$ vertices and let $v\in V(P)$. With the notations of Section \ref{sec:preliminaries}, we have
\begin{equation*}
\p_n(P_{n}\geqslant 1) \underset{n\rightarrow +\infty}{\longrightarrow} 1.
\end{equation*}
\end{thm}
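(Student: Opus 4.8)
The plan is to apply the second moment method (Bienaymé–Tchebychev) to the random variable $P_n$, so the goal reduces to showing $\mathbb{E}(P_n) \to +\infty$ together with $\mathbb{V}(P_n) = o\big(\mathbb{E}(P_n)^2\big)$, since then $\mathbb{P}_n(P_n = 0) \leq \mathbb{V}(P_n)/\mathbb{E}(P_n)^2 \to 0$. The starting point is to compute $\mathbb{E}(\gamma_{j,i_1,\dots,i_p})$ for a fixed choice of indices. By Lemma \ref{lem:number_patterns}, this probability is $\frac{p!}{\#\mathrm{Aut}(v,P)}$ times the probability that a specific rooted tree on $\{j,i_1,\dots,i_p\}$ sits inside a uniform random tree on $n$ vertices in the ``dangling'' way prescribed by conditions (1)–(3) of the pattern definition. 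The key combinatorial computation is to count, via Cayley's formula, the number of labelled trees on $n$ vertices that contain a prescribed pattern at prescribed vertices: one fixes the $p$ edges internal to $\widetilde T$, forbids all other edges incident to $v_{i_1},\dots,v_{i_p}$, requires exactly one extra edge at $v_j$ going outside, and lets the remaining $n - p - 1$ vertices plus $v_j$ form an arbitrary tree structure glued on. I expect this count to be of the form $c(P)\, n^{n-p-2}(1+o(1))$ for an explicit constant $c(P)$ depending only on $P$ (a generalized Cayley/forest count), so that $\mathbb{E}(\gamma) \sim c(P)\, p!\, /(\#\mathrm{Aut}(v,P)\, n^{p})$ up to the right power of $n$. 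Summing over the roughly $n^{p+1}/p!$ ordered-with-convention tuples then gives $\mathbb{E}(P_n) \sim \kappa(P)\, n$ for some positive constant, in particular $\mathbb{E}(P_n) \to +\infty$.

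Next I would bound the variance. Writing $\mathbb{V}(P_n) = \sum \mathrm{Cov}(\gamma_{\mathbf a},\gamma_{\mathbf b})$ over pairs of index-tuples $\mathbf a, \mathbf b$, I split the sum according to $\#(\{a\text{-vertices}\} \cap \{b\text{-vertices}\})$ and according to whether the two prescribed pattern-shapes are compatible. For disjoint vertex sets the patterns are nearly independent — the covariance is not exactly zero because fixing/forbidding edges on one side mildly perturbs the Cayley count on the other — but I expect each such covariance to be $O(\mathbb{E}(\gamma_{\mathbf a})\mathbb{E}(\gamma_{\mathbf b})/n)$, so the total contribution of disjoint pairs is $O(\mathbb{E}(P_n)^2/n) = o(\mathbb{E}(P_n)^2)$. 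For pairs sharing at least one vertex, the number of such pairs is $O(n^{2p+1})$ (one degree of freedom is lost), and since each term is $O(1)$ — indeed each joint probability is $O(n^{-2p})$ or smaller — the contribution is $O(n)$ = $O(\mathbb{E}(P_n))$ = $o(\mathbb{E}(P_n)^2)$. Special care is needed for pairs sharing a vertex in a way that makes the two pattern conditions incompatible (e.g. two patterns both claiming $v_j$ has exactly one outside neighbour, or sharing an interior vertex with conflicting degree constraints): there the joint probability is simply zero, which only helps.

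Concretely I would organize the write-up as: (i) a lemma computing $\mathbb{P}_n(\gamma_{j,i_1,\dots,i_p} = 1)$ exactly or asymptotically using Cayley's formula and Lemma \ref{lem:number_patterns}; (ii) a corollary giving $\mathbb{E}(P_n) \sim \kappa(P)\,n$; (iii) a lemma computing/bounding $\mathbb{P}_n(\gamma_{\mathbf a} = 1,\ \gamma_{\mathbf b} = 1)$ for the two regimes (disjoint support, overlapping support); (iv) assembling these into the bound $\mathbb{V}(P_n) = O(\mathbb{E}(P_n)^2/n + \mathbb{E}(P_n))$; and (v) the one-line Bienaymé–Tchebychev conclusion. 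The main obstacle I anticipate is step (iii) in the disjoint-support case: getting a clean enough handle on the ``quasi-independence'' correction — that is, proving $\mathrm{Cov}(\gamma_{\mathbf a},\gamma_{\mathbf b}) = O(\mathbb{E}(\gamma_{\mathbf a})\mathbb{E}(\gamma_{\mathbf b})/n)$ rather than merely $o(1)$ per term — which requires carefully comparing the Cayley counts of trees satisfying both pattern constraints versus the product of the two single-constraint counts. If a sharp per-term bound proves awkward, an acceptable fallback is to show directly that $\sum_{\mathbf a,\mathbf b\text{ disjoint}} \mathbb{P}_n(\gamma_{\mathbf a}=1,\gamma_{\mathbf b}=1) \leq \mathbb{E}(P_n)^2(1+o(1))$ by a global counting argument, which still yields $\mathbb{V}(P_n) = o(\mathbb{E}(P_n)^2)$.
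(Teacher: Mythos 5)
Your proposal follows essentially the same route as the paper: second moment method via Bienaym\'e--Tchebychev, with the first moment computed by Cayley's formula (count the trees on the $n-(p+1)$ remaining vertices, multiply by the $\frac{p!}{\#\mathrm{Aut}(v,P)}$ labellings of the pattern from Lemma \ref{lem:number_patterns}, and by the $n-(p+1)$ choices of attachment point), and the second moment handled by splitting pairs of index tuples according to their overlap. The one place where the paper is cleaner than your plan is the overlapping case: conditions (2) and (3) in the definition of a pattern force two patterns sharing any vertex to be either identical (if the roots coincide) or to create a cycle in the ambient tree (if they do not), so the joint probability for distinct overlapping tuples is exactly $0$, not merely ``$O(n^{-2p})$ or smaller'' --- and indeed your blanket per-term bound is false for the diagonal terms, whose joint probability is $\Theta(n^{-p})$; your stratification by overlap size still gives the correct total $O(n)$, but the exact vanishing removes the need for it. Likewise, the ``quasi-independence'' correction you flag as the main obstacle in the disjoint case dissolves once you compute $\E_n(\gamma_{\mathbf a}\gamma_{\mathbf b})$ exactly by the same Cayley argument (two patterns glued independently to a tree on $n-2(p+1)$ vertices), which yields $\E_n(P_n^2)/\E_n(P_n)^2\to 1$ directly and is exactly your proposed fallback. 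So the plan is sound and would succeed as written, modulo tightening the overlapping-pair estimate.
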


We will prove this result by showing $\p_n(P_{n}=0) \underset{n\to +\infty}{\longrightarrow} 0$. For clarity, the proof will be split into several propositions. From now on, we fix an integer $p$ and a pattern $(v, P)$ with $p+1$ vertices. Following \cite{erdos1963asymmetric} and \cite{junk2020almost}, the strategy is to use the Bienaymé-Tchebychev inequality to estimate the probability of the complementary event, and this first requires computing the expectation and variance of $P_{n}$. Let us start with the expectations of the random variables $\gamma$. Recall that we see them as random variables on the set of all trees with $n$ vertices with the uniform measure.

\begin{prop}\label{prop:expectation_gamma}
For any $1\leqslant j, i_{1}, \cdots, i_{p}\leqslant n$, we have
\begin{equation*}
\E_n(\gamma_{j,i_{1}, \cdots, i_{p}}) =
\begin{cases}
0 & \text{if two indices are equal} \\
\displaystyle\frac{p!}{\#\mathrm{Aut}(v, P)}\frac{(n-(p+1))^{n-(p+2)}}{n^{n-2}} & \text{otherwise}
\end{cases}
\end{equation*}
where $\E_n(X)$ denotes the expected value of $X$ with respect to the probability measure $\p_n$.
\end{prop}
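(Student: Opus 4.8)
The plan is to turn the expectation into a counting problem. Since $\gamma_{j,i_1,\dots,i_p}$ takes values in $\{0,1\}$, we have $\E_n(\gamma_{j,i_1,\dots,i_p}) = \p_n(\gamma_{j,i_1,\dots,i_p}=1) = N/n^{n-2}$, where $N$ is the number of labelled trees $T$ on the vertex set $\llbracket 1,n\rrbracket$ for which $(v_j,v_{i_1},\dots,v_{i_p})$ is a $(v,P)$-pattern, and $n^{n-2}$ is the total number of labelled trees on $n$ vertices by Cayley's formula. If two of the indices $j,i_1,\dots,i_p$ coincide then $N=0$, because a pattern is defined in terms of $p+1$ \emph{distinct} vertices; this gives the first case. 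From now on assume the indices are pairwise distinct, and (to sidestep a degenerate case whose contribution is anyway $0$) that $n\geq p+2$.

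Next I would establish a bijection decomposing every tree counted by $N$. Write $S=\{v_j,v_{i_1},\dots,v_{i_p}\}$, $W=V(T)\setminus S$, and $m:=|W|=n-(p+1)$. Given such a $T$, let $\widetilde T$ be the subgraph induced on $S$. Condition (1) says $(v_j,\widetilde T)$ is a rooted tree isomorphic to $(v,P)$. A sum-of-degrees computation using conditions (2) and (3) shows that there is exactly one edge of $T$ between $S$ and $W$, namely an edge $\{v_j,w\}$ for some $w\in W$, and no vertex of $S$ has any other neighbour. Hence $E(T)=E(\widetilde T)\sqcup\{\{v_j,w\}\}\sqcup E(T[W])$, and comparing cardinalities ($|E(T)|=n-1$, $|E(\widetilde T)|=p$) forces $T[W]$ to have $m-1$ edges; being acyclic on $m$ vertices it is a spanning tree of $W$. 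Conversely, any choice of a rooted tree structure on $S$ isomorphic to $(v,P)$, a vertex $w\in W$, and a spanning tree on $W$ reassembles into a tree on $\llbracket 1,n\rrbracket$ realising the pattern, and these three choices are independent. Therefore $N$ equals (number of rooted trees on $S$ isomorphic to $(v,P)$) $\times\, m\, \times$ (number of trees on $W$).

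Finally I would substitute the known counts: the first factor is $p!/\#\Aut(v,P)$ by Lemma \ref{lem:number_patterns}, and the third is $m^{m-2}$ by Cayley's formula, so
\[
N=\frac{p!}{\#\Aut(v,P)}\,\cdot m\cdot m^{m-2}=\frac{p!}{\#\Aut(v,P)}\,m^{m-1}=\frac{p!}{\#\Aut(v,P)}\,(n-(p+1))^{n-(p+2)},
\]
and dividing by $n^{n-2}$ gives the formula. The main point requiring care is the verification that the decomposition above really is a bijection: one must check that conditions (2) and (3) are equivalent to ``exactly one edge leaves $v_j$ towards the complement, and no other vertex of $S$ is touched from outside'', so that the copy of $P$, the attaching edge, and the spanning tree on the complement can indeed be chosen without any interaction; the small value $m=1$ should be checked directly to agree with the convention $m^{m-1}=1$, and $n=p+1$ ($m=0$) contributes $N=0$.
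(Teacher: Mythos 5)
Your proposal is correct and follows essentially the same route as the paper's proof: count the trees realising the pattern by decomposing them into a rooted copy of $(v,P)$ on the chosen indices (counted via Lemma \ref{lem:number_patterns}), an attaching vertex among the remaining $n-(p+1)$, and a tree on the complement (counted via Cayley's formula), then divide by $n^{n-2}$. The only difference is that you spell out the degree-sum verification that the decomposition is a genuine bijection, which the paper leaves implicit.
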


\begin{proof}
The first case follows by definition, hence all we have to do is count the number of possible patterns on the indices. To do this, let us describe how to construct all trees with $n$ vertices in which $(j,i_1,...,i_p)$ is a $(v, P)$-pattern :
\begin{enumerate}
\item According to Cayley's formula, there are $(n-(p+1))^{n-(p+1)-2}$ possible trees we can construct with $(n-(p+1))$ vertices which we can then label with the indices other than $j, i_{1}, \cdots, i_{p}$.
\item By Lemma \ref{lem:number_patterns}, there are $\displaystyle\frac{p!}{\#\mathrm{Aut}(v,P)}$ $(v, P)$-patterns labelled by $j, i_{1}, \cdots, i_{p}$.
\item We can glue each possible pattern to the rest of the tree via one of the $(n-(p+1))$ vertices remaining, giving us $(n-(p+1))$ trees for each of the two choices above.
\end{enumerate}
Conversely, it is clear that any tree with in which $(j,i_1,...,i_p)$ is a $(v, P)$-pattern comes from this construction, hence the result.
\end{proof}

This straightforwardly yields the expectation of $P_{n}$.

\begin{cor}\label{cor:expectation_P}
We have
\begin{equation*}
\E_n(P_n)=n\binom{n-1}{p}\frac{p!}{\#\mathrm{Aut}(v,P)}\frac{(n-(p+1))^{n-(p+2)}}{n^{n-2}}
\end{equation*}
\end{cor}

\begin{proof}
The first statement follows from Proposition \ref{prop:expectation_gamma} by linearity of the expectation. Indeed, we only have to count the number of possible $\gamma_{j, i_{1}, \cdots, i_{p}}$'s :   
\begin{enumerate}
\item We have $n$ choices for $j$.
\item We have $\binom{n-1}{p}$ choices for $\{i_1,...,i_p\}$ once $j$ is fixed.
\end{enumerate}

\end{proof}

The next step is to compute the variances. Once again, we start with $\gamma$.

\begin{lem}\label{lem:variance_gamma}
Let $n\geq 2(p+1)$ be an integer and $j, i_{1}, \cdots, i_{p}, j', i'_{1}, \cdots, i'_{p}$ be indices in $\llbracket 1,n \rrbracket$. Then,
\begin{equation*}
\E_n(\gamma_{j,i_1,...,i_p}\gamma_{j',i'_1,...,i'_p})=
\begin{cases}
\left(\frac{p!}{\#\mathrm{Aut}(v, P)}\right)^{2}\frac{(n-2(p+1))^{n-2(p+1)}}{n^{n-2}} & \text{if all the indices are distinct} \\
\frac{p!}{\#\mathrm{Aut}(v, P)}\frac{(n-(p+1))^{n-(p+2)}}{n^{n-2}} & \text{if $j=j'$ and $\{i_{1}, \cdots, i_{p}\}=\{i'_{1}, \cdots, i'_{p}\}$} \\
0 & otherwise
\end{cases}
\end{equation*}
\end{lem}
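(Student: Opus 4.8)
The plan is to compute $\E_n(\gamma_{j,i_1,\ldots,i_p}\gamma_{j',i'_1,\ldots,i'_p})$, exactly as in the proof of Proposition~\ref{prop:expectation_gamma}, as the number of trees $T$ on $n$ vertices for which \emph{both} $(v_j,v_{i_1},\ldots,v_{i_p})$ and $(v_{j'},v_{i'_1},\ldots,v_{i'_p})$ are $(v,P)$-patterns, divided by $n^{n-2}$. Write $A=\{v_j,v_{i_1},\ldots,v_{i_p}\}$ and $A'=\{v_{j'},v_{i'_1},\ldots,v_{i'_p}\}$. First I would record the following reading of the pattern conditions: $(v_j,v_{i_1},\ldots,v_{i_p})$ is a $(v,P)$-pattern in $T$ exactly when $T|_A$ is a subtree isomorphic, as a rooted tree with root $v_j$, to $(v,P)$, while $v_j$ has exactly one neighbour outside $A$ and no $v_{i_k}$ has a neighbour outside $A$; in that case $A$ is a \emph{branch} of $T$, meaning that the unique edge leaving $A$ (which is incident to $v_j$) disconnects $T$ into $A$ and $V(T)\setminus A$ when deleted. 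The three cases of the statement are then handled separately.

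The second case, $j=j'$ and $\{i_1,\ldots,i_p\}=\{i'_1,\ldots,i'_p\}$, is immediate: by the first remark following the definition of $\gamma$, the function $\gamma_{j,i_1,\ldots,i_p}$ depends only on $j$ and on the set $\{i_1,\ldots,i_p\}$, so the product equals $\gamma_{j,i_1,\ldots,i_p}^2=\gamma_{j,i_1,\ldots,i_p}$, and Proposition~\ref{prop:expectation_gamma} gives the announced value.

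For the third (``otherwise'') case the plan is to show that no tree $T$ realises both patterns. If an index is repeated inside one of the two tuples the corresponding $\gamma$ vanishes identically, so one may assume $|A|=|A'|=p+1$; being in the third case then amounts to $A\cap A'\neq\emptyset$ together with $(A,j)\neq(A',j')$. Suppose $T$ realises both and let $e,e'$ be the unique boundary edges of the branches $A,A'$, incident respectively to $v_j,v_{j'}$. If $e=e'$, then $A$ and $A'$ are each one of the two components of $T$ minus $e$: if they are the same one then $A=A'$ and $v_j=v_{j'}$, excluded; if they are the two different ones then $A\cap A'=\emptyset$, also excluded. If $e\neq e'$, deleting $e$ and $e'$ splits $T$ into three subtrees arranged linearly as $C_1-e-C_2-e'-C_3$ (with $C_2$ nonempty, possibly reduced to a single vertex shared by $e$ and $e'$), and then $A\in\{C_1,\,C_2\cup C_3\}$, $A'\in\{C_1\cup C_2,\,C_3\}$, with $v_j$ the endpoint of $e$ in $A$ and $v_{j'}$ the endpoint of $e'$ in $A'$. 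Of the four possibilities, two force a strict inclusion between $A$ and $A'$, impossible since $|A|=|A'|$; one gives $A\cap A'=\emptyset$, excluded; and the remaining one, $A=C_2\cup C_3$ and $A'=C_1\cup C_2$, forces $|C_1|=|C_3|$ and $|C_2|\geq1$, whence $n=2|C_1|+|C_2|\leq 2p+1<2(p+1)$, contradicting the hypothesis $n\geq2(p+1)$. So no such $T$ exists and the expectation is $0$. This is the step I expect to be the main obstacle: pinning down exactly how two branches of a tree can overlap — including the degenerate situations $e=e'$ and ``$e,e'$ share an endpoint'' — and noticing that the bound $n\geq 2(p+1)$ is precisely what excludes the one genuinely problematic configuration.

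Finally, for the first case all $2(p+1)$ indices are distinct, i.e.\ $A\cap A'=\emptyset$. I would fix one of the $\bigl(\tfrac{p!}{\#\mathrm{Aut}(v,P)}\bigr)^{2}$ pairs of rooted-tree structures on $A$ and on $A'$ isomorphic to $(v,P)$ (Lemma~\ref{lem:number_patterns}), and then check that the trees $T$ realising both patterns and inducing those prescribed structures on $A$ and $A'$ are in bijection with the labelled trees on the $n-2(p+1)+2$ vertices $W\cup\{\alpha,\beta\}$ — where $W=V(T)\setminus(A\cup A')$ and $\alpha,\beta$ are the vertices to which $A,A'$ are contracted — in which $\alpha$ and $\beta$ are both leaves; one direction contracts $A$ and $A'$ inside $T$, the other un-contracts, attaching the pendant edge at $\alpha$ (resp.\ $\beta$) to $v_j$ (resp.\ $v_{j'}$). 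By the Pr\"ufer correspondence the number of labelled trees on $N$ vertices having two prescribed vertices as leaves is $(N-2)^{N-2}$ (with the convention $0^{0}=1$, which also covers $n=2(p+1)$), hence $(n-2(p+1))^{n-2(p+1)}$ here; multiplying by $\bigl(\tfrac{p!}{\#\mathrm{Aut}(v,P)}\bigr)^{2}$ and dividing by $n^{n-2}$ yields the first formula. The first case is thus a routine contraction-and-Pr\"ufer count, and the second is essentially free.
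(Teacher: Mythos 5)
Your proof is correct, and while the overall strategy (direct enumeration of the trees realising both patterns, following the three cases of the statement) is the same as the paper's, the execution of the two non-trivial cases differs in a worthwhile way. For the case of disjoint index sets, the paper builds the tree by choosing the two rooted patterns, a tree on the remaining $n-2(p+1)$ vertices, and an attachment point for each pattern, giving $(n-2(p+1))^{n-2(p+1)-2}\cdot(n-2(p+1))^{2}$; your contraction-plus-Pr\"ufer bijection yields the same count but handles the boundary situation $n=2(p+1)$ (where the two patterns must be glued to each other and there are no ``remaining vertices'' to attach to) uniformly, which the paper's construction glosses over. For the second case you observe that $\gamma_{j,i_1,\ldots,i_p}$ depends only on $j$ and the set $\{i_1,\ldots,i_p\}$, so the product is $\gamma^2=\gamma$ and Proposition \ref{prop:expectation_gamma} applies directly; the paper instead proves the stronger rigidity statement that $j=j'$ together with both patterns being present forces the two vertex sets to coincide, a fact it then recycles for the ``otherwise'' case. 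The real divergence is in that ``otherwise'' case: the paper argues via neighbourhood propagation along paths and the construction of a cycle in the ambient tree, whereas you exploit the branch structure (each pattern is a branch cut off by a unique boundary edge), classify the possible intersections of two branches through the three-component decomposition $C_1-e-C_2-e'-C_3$, and kill the only surviving configuration $A=C_2\cup C_3$, $A'=C_1\cup C_2$ by the cardinality bound $n=2p+2-|C_2|\leq 2p+1<2(p+1)$. This makes completely explicit where the hypothesis $n\geq 2(p+1)$ is needed (and your example-independent bound is sharp, as the path on $3$ vertices with the cherry pattern shows), a point that is rather buried in the paper's argument.
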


\begin{proof}
 We split the proof in three cases.
\begin{enumerate} \item  \emph{First case:} the number of different rooted trees with root $v_{j}$ (resp. $v_{j'}$) over the set $\{j, i_{1}, \cdots, i_{p}\}$ (resp. $\{j', i'_{1}, \cdots, i'_{p}\}$) is $\frac{p!}{\#\mathrm{Aut}(v, P)}$ and we can always construct such trees simultaneously as the sets of indices are distinct. Then, we can construct $(n-2(p+1))^{n-2(p+1)-2}$ different trees over the $n-2(p+1)$ remaining vertices (that are not $j, i_{1}, \cdots, i_{p}, j', i'_{1}, \cdots, i'_{p}$). We have $n-2(p+1)$ places where we can attach each pattern. The conclusion follows.
\item \emph{Second case:} let $A$ be a tree with $n$ vertices such that $j=j'$ and both $(v_{j}, v_{i_{1}}, \cdots, v_{i_{p}})$ and $(v_{j}, v_{i'_{1}}, \cdots, v_{i'_{p}})$ are $(v,P)$-patterns. Let us note $v_j,T$ and $v_j,T'$ the rooted trees of the corresponding pattern. By definition of a pattern we have 
\begin{equation*}
\text{deg}(v_j)=\text{deg}_T(v_j)+1=\text{deg}_{T'}(v_j)+1
\end{equation*}
so that $v_j$ has same degree in both $T$ and $T'$ and has degree $1$ when we remove one of the trees from $A$. As $n\geq 2(p+1)$, $v_j$ is not linked to $T'$ when we suppress $T$ from $A$ (and vice versa). Therefore, any vertex adjacent to $v_j$ in $T$ is also adjacent to $v_{j}$ in $T'$. Let $v$ be adjacent to $v_j$ in $T$ (it exists as $p$ is greater than $1$) then $v_j$ is in $T'$. As $(v_j,T)$ and $(v_j,T')$ are patterns and $v$ is in both $T$ and $T'$ all of its neighborhood in $A$ is in both $T$ and $T'$. Then, the path in $T$ from $v$ to a vertex of $T$ is also fully included in $T'$. Then any vertex of $T$ is a vertex of $T'$. As both trees have the same amount of vertices we conclude that $T=T'$. This proves the \emph{second case} as we are drawn back to Proposition \ref{prop:expectation_gamma}. This also partly proves the last case.
\item \emph{Third case:} let us now finish the proof. We have to show that if $j \neq j'$ but the indices are not distinct then $(v_{j}, v_{i_{1}}, \cdots, v_{i_{p}})$ and $(v_{j}, v_{i'_{1}}, \cdots, v_{i'_{p}})$ can't both be $(v,P)$-patterns. If both are patterns then let $T$ and $T'$ be as in the previous case and let $A$ be the ambient tree. Let $v$ be a vertex that belongs to both $T$ and $T'$. As $T$ is a pattern, the whole neighbourhood of $v$ is in both $T$ and $T'$, hence with the same argument as in the previous case, the path from $v$ to any vertex (different of $v_j$) of $T$ is also fully included in $T'$. As a consequence, apart from $v_j$ and $v_{j'}$, $T$ and $T'$ have the same vertex sets. Now, there exists a path from $v_j$ to $v_{j'}$ consisting in vertices of $T$, except $v_{j'}$. But, $(v_j,T)$ being a pattern, there also exists a path between $v_{j'}$ and $v_j$ consisting in vertices outside $T$ (only keeping the root $j$). Combining both, we constructed a cycle in $A$, which is impossible as $A$ is a tree, hence the result.
\end{enumerate}

This describes all trees with $n$ vertices admitting both $(j, i_{1}, \cdots, i_{p})$ and $(j', i'_{1}, \cdots, i'_{p})$ as patterns, so that the proof is complete.
\end{proof}

Using this we can compute the variance of $P_{n}$.

\begin{prop}\label{prop:variance_P}
Assume that $n\geqslant 2p+2$. Then,
\begin{equation*}
\E_n(P_{n}^{2}) = \left[\frac{n!(n-2(p+1))^{n-2(p+1)}}{(\#\mathrm{Aut}(v, P))^{2}(n-2(p+1))!} + \frac{n!(n-(p+1))^{n-(p+2)}}{\#\mathrm{Aut}(v, P)(n-(p+1))!}\right]\frac 1{n^{n-2}}
\end{equation*}
\end{prop}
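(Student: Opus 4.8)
The plan is to expand $P_n^2$ as a double sum over the indexing tuples appearing in the definition of $P_n$, apply linearity of the expectation, and then invoke Lemma \ref{lem:variance_gamma} term by term. Writing $P_n = \sum_{(j,i_1,\dots,i_p)}\gamma_{j,i_1,\dots,i_p}$, where the sum ranges over all tuples with $i_1 > i_2 > \cdots > i_p$ and $i_k \neq j$, one gets
\[
\E_n(P_n^2) = \sum_{(j,i_1,\dots,i_p)}\ \sum_{(j',i'_1,\dots,i'_p)} \E_n\big(\gamma_{j,i_1,\dots,i_p}\,\gamma_{j',i'_1,\dots,i'_p}\big),
\]
and Lemma \ref{lem:variance_gamma} says that only two families of pairs contribute: those in which all $2(p+1)$ indices are distinct, and those in which $j = j'$ and $\{i_1,\dots,i_p\} = \{i'_1,\dots,i'_p\}$. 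So the whole computation reduces to counting how many pairs of tuples fall into each family and multiplying by the corresponding value given by the Lemma.

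For the second family, the decreasing-order convention on $i_1,\dots,i_p$ means that $\{i_1,\dots,i_p\} = \{i'_1,\dots,i'_p\}$ forces $(i_1,\dots,i_p) = (i'_1,\dots,i'_p)$; hence this family is exactly the diagonal, consisting of one pair for each of the $n\binom{n-1}{p}$ tuples $(j,i_1,\dots,i_p)$. Multiplying this count by the value $\frac{p!}{\#\mathrm{Aut}(v,P)}\cdot\frac{(n-(p+1))^{n-(p+2)}}{n^{n-2}}$ from the Lemma and using the identity $n\binom{n-1}{p}p! = \frac{n!}{(n-(p+1))!}$ yields precisely the second bracketed term.

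For the first family, once $(j,i_1,\dots,i_p)$ is chosen ($n\binom{n-1}{p}$ ways), the index $j'$ must avoid the $p+1$ already chosen indices ($n-(p+1)$ ways) and $\{i'_1,\dots,i'_p\}$ must be a $p$-subset of the remaining $n-(p+2)$ indices ($\binom{n-(p+2)}{p}$ ways); the hypothesis $n\geq 2(p+1)$ is exactly what makes this nonempty. The resulting count $n\binom{n-1}{p}(n-(p+1))\binom{n-(p+2)}{p}$ simplifies to $\frac{n!}{(p!)^2(n-2(p+1))!}$, and multiplying by $\left(\frac{p!}{\#\mathrm{Aut}(v,P)}\right)^2\frac{(n-2(p+1))^{n-2(p+1)}}{n^{n-2}}$ from the Lemma gives the first bracketed term. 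Since all remaining pairs contribute $0$, adding the two contributions produces the claimed formula.

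As for difficulty, there is no genuine obstacle here: the argument is pure bookkeeping. The only points demanding care are ensuring the ordering convention is applied correctly so that the ``$j=j'$, equal index sets'' case of the Lemma collapses exactly onto the diagonal (and is not over- or under-counted), and checking that the binomial/factorial manipulations land on precisely the normalizations $\frac{n!}{(\#\mathrm{Aut}(v,P))^2(n-2(p+1))!}$ and $\frac{n!}{\#\mathrm{Aut}(v,P)(n-(p+1))!}$ appearing in the statement.
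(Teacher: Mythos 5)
Your proposal is correct and follows essentially the same route as the paper: expand $\E_n(P_n^2)$ as a double sum, invoke Lemma \ref{lem:variance_gamma} to isolate the two contributing families of pairs, and count them as $n\binom{n-1}{p}$ (the diagonal) and $n\binom{n-1}{p}(n-(p+1))\binom{n-(p+2)}{p}$ (all indices distinct), which simplify to the stated factorial expressions. Your explicit remark that the decreasing-order convention makes the ``$j=j'$, equal index sets'' case coincide exactly with the diagonal is a point the paper leaves implicit, but the argument is the same.
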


\begin{proof}
Writing $P_{n}$ as the sum of all $\gamma$ functions and expanding the square, three types of terms appear, corresponding to the three cases of Lemma \ref{lem:variance_gamma}. The last case yields $0$ and the second one yields the second term in the sum of the statement. Therefore, all we have to do is to count the number of times the first case occurs, that is to say the number of sets of indices $j, i_{1}, \cdots, i_{p}, j', i'_{1}, \cdots, i'_{p'}$ all distinct. This is $0$ if $n\leqslant 2p+2$, hence the assumption. Now we proceed as follows:
\begin{enumerate}
\item We choose the first set of indices: we have $n$ choices for the root and $\binom{n-1}{p}$ choices for the remainder.
\item Then, we choose the second set of indices: we have $(n-(p+1))$ choices for the root and $\binom{n-(p+2)}{p}$ for the remainder.
\end{enumerate}
Thus, we have:
\begin{align*}
\E_n(P_{n}^{2}) & = \frac{1}{n^{n-2}} \left(\frac{n!(n-(p+1))^{n-(p+2)}}{\#\mathrm{Aut}(v, P)(n-(p+1))!}\right. \\
& + \left. n\binom{n-1}{p}(n-(p+1))\binom{n-(p+2)}{p}\frac{p!^2(n-2(p+1))^{n-2(p+1)}}{\#\mathrm{Aut}(v, P)^{2}}\right).
\end{align*}

\vspace{0.4cm}
And by spelling out the binomial coefficients, we observe that :
\begin{align*}
    n\binom{n-1}{p}(n-(p+1))\binom{n-(p+2)}{p} & = \frac{n (n-1)! (n-(p+1))(n-(p+2))!}{p! (n-1-p)! p! (n-(p+2)-p)!} \\
    & = \frac{n!}{p!^2 (n-2(p+1))!}.
\end{align*}

\vspace{0.4cm}
Finally, substituting this back into our initial formula, we obtain:
\begin{align*}
\E_n(P_{n}^{2}) & = \frac{1}{n^{n-2}} \left( \frac{n!(n-(p+1))^{n-(p+2)}}{\#\mathrm{Aut}(v, P)(n-(p+1))!} + \frac{n!}{p!^2 (n-2(p+1))!}\frac{p!^2(n-2(p+1))^{n-2(p+1)}}{\#\mathrm{Aut}(v, P)^{2}} \right) \\
& = \frac{1}{n^{n-2}} \left(\frac{n!(n-(p+1))^{n-(p+2)}}{\#\mathrm{Aut}(v, P)(n-(p+1))!}  + \frac{n!}{(n-2(p+1))!}\frac{(n-2(p+1))^{n-2(p+1)}}{\#\mathrm{Aut}(v, P)^{2}} \right).
\end{align*}

\end{proof}
\vspace{0.4cm}
We now have everything in hand to prove Theorem \ref{thm:main}.

\begin{proof}[Proof of Theorem \ref{thm:main}]
We will use the complementary event and study $\p_n(P_{n} = 0)$. Observing that if $P_{n} = 0$, then
\begin{equation*}
\vert P_{n} - \E_n(P_{n})\vert = \vert \E_n(P_{n})\vert \geqslant \E_n(P_{n})
\end{equation*}
and using the Bienaymé-Tchebychev inequality, we have
\begin{align*}
\p_n(P_{n} = 0) & \leqslant \p_n(\vert P_{n} - \E_n(P_{n})\vert \geqslant \E_n(P_{n})) \\
& \leqslant \frac{\mathrm{Var}(P_{n})}{\E_n(P_{n})^{2}} \\
& = \frac{\E_n(P_{n}^{2}) - \E_n(P_{n})^{2}}{\E(P_{n})^{2}} \\
& = \frac{\E_n(P_{n}^{2})}{\E_n(P_{n})^{2}} - 1.
\end{align*}

But we have the following asymptotics for $\E_n(P_n)$ and $\E_n(P_n^2)$ :

\begin{align*}
\E_n(P_n) & =n\binom{n-1}{p}\frac{p!}{\#\mathrm{Aut}(v,P)}\frac{(n-(p+1))^{n-(p+2)}}{n^{n-2}} \\
          & =n\frac{(n-1)!}{p!(n-1-p)!}\frac{p!}{\#\mathrm{Aut}(v,P)}\frac{(n-(p+1))^{n-(p+2)}}{n^{n-2}} \\
          & =\frac{n}{\#\mathrm{Aut}(v,P)}(n-1)...(n-p)(n-(p+1))^{-p}\frac{(n-(p+1))^{n-2}}{n^{n-2}} \\
          & =\frac{n}{\#\mathrm{Aut}(v,P)}(1-\frac{p+1}{n})^{n-2}(1+O(\frac 1 n)) \\
          & =\frac{n}{\#\mathrm{Aut}(v,P)}(e^{-(p+1)}+O(\frac 1 n))+O(1)\\
          & =\frac{n}{\#\mathrm{Aut}(v,P)}e^{-(p+1)}+O(1)
\end{align*}

\begin{align*}
    \E_n(P_{n}^{2}) & = \left[\frac{n!(n-2(p+1))^{n-2(p+1)}}{(\#\mathrm{Aut}(v, P))^{2}(n-2(p+1))!} + \frac{n!(n-(p+1))^{n-(p+2)}}{\#\mathrm{Aut}(v, P)(n-(p+1))!}\right]\frac 1{n^{n-2}} \\
    & = \frac{1}{n^{n-2}}\frac{n!(n-2(p+1))^{n-2(p+1)}}{(\#\mathrm{Aut}(v, P))^{2}(n-2(p+1))!}+\frac{n}{\#\mathrm{Aut}(v,P)}e^{-(p+1)}+O(1) \\
    & = \frac{1}{(\#\mathrm{Aut}(v, P))^2}(1-\frac{2(p+1)}{n})^{n-2}\frac{n(n-1)...(n-2p-1)}{(n-2(p+1))^{2p}}  +\frac{n}{\#\mathrm{Aut}(v,P)}e^{-(p+1)}+O(1)\\
    & = \frac{1}{(\#\mathrm{Aut}(v, P))^2}(e^{-2(p+1)}+O(\frac 1 n))(n^2+O(n))+\frac{n}{\#\mathrm{Aut}(v,P)}e^{-(p+1)}+O(1)\\
    & =  \frac{n^2}{(\#\mathrm{Aut}(v, P))^2}e^{-2(p+1)}+O(n)
\end{align*}

Hence
\begin{equation*}
\frac{\E_n(P_n ^2)}{\E_n(P_n)^2} \underset{n \to +\infty}{\sim} 1.
\end{equation*}
Therefore,
\begin{equation*}
\p_n(P_n = 0) \underset{n\to +\infty}{\longrightarrow} 0
\end{equation*}
and eventually, using the complementarity : 
\begin{equation*}
\p_n(P_n \ge 1) \underset{n\to +\infty}{\longrightarrow} 1.
\end{equation*}
\end{proof}

\section{Application to quantum automorphism groups}\label{sec:applications}

Given a finite graph $G$, one can associate to it by \cite{banica2005quantum} a compact quantum group in the sense of \cite{woronowicz1995compact}, called its \emph{quantum automorphism group} and denoted by $\QAut(G)$. It is in general a difficult problem to understand the structure of $\QAut(G)$ from the properties of $G$, or even to know whether it is genuinely quantum (in the sense that the associated C*-algebra is non-commutative). In the case of trees, it was proven in \cite{junk2020almost} that the quantum automorphism group is almost surely infinite, hence genuinely quantum. Our result will now allow us to produce more examples of such asymptotic properties.

\begin{thm}
Let $P$ be a tree. Then, almost surely, the quantum automorphism group of any tree contains $\QAut(P)$ as a quantum subgroup.
\end{thm}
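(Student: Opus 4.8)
The goal is to deduce the statement about quantum automorphism groups from Theorem~\ref{thm:main}. The key algebraic input needed is the following: if a tree $T$ contains a copy of the (rooted) pattern $(v,P)$ attached to the rest of $T$ through the single vertex $v$ (i.e.\ $P_n(T)\ge 1$), then the rooted automorphism group and its quantum analogue of $(v,P)$ embed into $\Aut(T)$ and $\QAut(T)$ respectively. I would first make precise the passage from a pattern to a \emph{rooted} pattern: any graph automorphism of $P$ fixing $v$ can be extended by the identity on $T\setminus P$ to an automorphism of $T$, because the only edge joining the $P$-part to the rest of $T$ is the distinguished one at $v$ (condition~(2) in the definition of a pattern, together with condition~(3) ensuring the $v_{i_k}$ have no outside neighbours). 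This gives an injective group homomorphism $\Aut(v,P)\hookrightarrow\Aut(T)$; the remark in the introduction about ``rootification'' is exactly this observation, and the same extension-by-identity construction works verbatim on the Hopf-algebra level.

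**Key steps.** First I would recall the definition of $\QAut(G)$ as the compact quantum group whose function algebra is the universal $C^*$-algebra generated by a magic unitary $u=(u_{xy})_{x,y\in V(G)}$ commuting with the adjacency matrix of $G$, and the notion of quantum subgroup as a surjective Hopf-$*$-algebra morphism. Second, given $T$ with $P_n(T)\ge 1$, fix a $(v,P)$-pattern with underlying vertex set $W\subset V(T)$ and note that $W$ is ``isolated'' in $T$ except for the single edge at $v$ to $V(T)\setminus W$. Third, define a $*$-homomorphism $C(\QAut(T))\to C(\QAut(\widetilde T))$ (where $\widetilde T$ is the induced rooted subtree, isomorphic to $(v,P)$) by sending $u_{xy}\mapsto \widetilde u_{xy}$ for $x,y\in W$ and $u_{xy}\mapsto \delta_{xy}$ otherwise; one checks this respects the magic-unitary relations and commutation with the adjacency matrix, using that the partition $V(T)=W\sqcup(V(T)\setminus W)$ has no cross-edges other than the fixed pendant edge at $v$, so $v$ (being a cut vertex whose removal detaches $W\setminus\{v\}$) is not moved. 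This map is surjective and intertwines the coproducts, hence exhibits $\QAut(\widetilde T)\cong\QAut(v,P)$ as a quantum subgroup of $\QAut(T)$. Finally, invoke Theorem~\ref{thm:main}: $\p_n(P_n\ge 1)\to 1$, so the event ``$\QAut(P)$ embeds in $\QAut(T)$'' holds asymptotically almost surely.

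**Main obstacle.** The genuinely delicate point is the verification that the map $u_{xy}\mapsto\widetilde u_{xy}$ (on $W$) and $\mapsto\delta_{xy}$ (elsewhere) is well defined on the universal $C^*$-algebra, i.e.\ that it does not destroy the relation $u A_T = A_T u$ with $A_T$ the adjacency matrix of $T$. This requires knowing that no quantum permutation in $\QAut(T)$ can move a vertex of $W\setminus\{v\}$ outside $W$ or move $v$ at all — a ``rigidity'' statement about the pendant attachment. The cleanest way is to argue that $v$ is the unique vertex of $W$ with a neighbour outside $W$ and that this property is detected by the adjacency relations (degrees, and more finely the isomorphism type of the two sides), forcing $u_{v,x}=0$ for $x\notin W$ via the standard trick that magic unitaries commuting with $A_T$ preserve vertex orbits refined by any graph-theoretic invariant; in the worst case one replaces $P$ by its ``rootification'' — adding a pendant vertex at $v$ — so that the distinguished vertex becomes intrinsically characterised, which is precisely the manoeuvre alluded to in the introduction. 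Once this rigidity is in place, everything else is a routine check, and the probabilistic conclusion is immediate from Theorem~\ref{thm:main}.
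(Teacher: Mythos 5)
Your construction of the surjection $C(\QAut(T))\to C(\QAut(v,\widetilde T))$ is essentially fine (and more detailed than the paper, which simply declares the embedding ``clear''): the block matrix equal to $\widetilde u$ on $W\times W$ and to the identity elsewhere is a magic unitary, and commutation with $A_T$ reduces, on the unique cross-edge at $v$, to the relation $\widetilde u_{xv}=\delta_{xv}$ --- which is exactly the defining relation of the \emph{rooted} quantum automorphism group $\QAut(v,P)$. This also shows that your ``main obstacle'' is misdiagnosed: no rigidity statement about $\QAut(T)$ is needed to define the map; what the cross-edge forces is only that the \emph{target} be the rooted group, not $\QAut(P)$, so your identification $\QAut(\widetilde T)\cong\QAut(v,P)$ is where the trouble hides.

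And that is the genuine gap. What you obtain is $\QAut(v,P)\hookrightarrow\QAut(T)$, but the theorem asserts $\QAut(P)\hookrightarrow\QAut(T)$, and $\QAut(v,P)$ can be a proper quantum subgroup of $\QAut(P)$ for a bad choice of $v$: take $P$ a path on three vertices and $v$ an endpoint, so that $\Aut(v,P)$ is trivial while $\Aut(P)=\Z/2\Z$, and no automorphism of $T$ extends the flip of that copy of $P$ because $v$ is pinned by its outside neighbour. The missing step --- and the actual content of the paper's proof --- is to choose the root so that $\QAut(v,P)=\QAut(P)$, namely to root $P$ at its \emph{center}, subdividing the central edge into a new vertex $\widetilde v$ when the center is an edge, and then to apply Theorem \ref{thm:main} to the pattern $(\widetilde v,\widetilde P)$; the point is that quantum automorphisms preserve distances, hence fix the center, so every quantum automorphism of $\widetilde P$ already fixes $\widetilde v$. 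Your version of ``rootification'' (adding a pendant vertex at $v$) does not accomplish this: it neither makes $v$ fixed by all automorphisms of the enlarged tree nor preserves the quantum automorphism group, and it is invoked to patch a problem that is not the real one. Without rooting at the center, the probabilistic conclusion only yields the possibly much smaller rooted group almost surely.
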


\begin{proof}
Consider a tree $T$ containing the pattern $(v, P)$ for some $v\in V(P)$. It is clear that $\mathrm{Aut}(T)$ contains $\mathrm{Aut}(v, P)$, and the same is true for quantum automorphism groups. It is therefore enough to show that there exists a rooted tree $(v, \widetilde{P})$ such that $\QAut(v, \widetilde{P}) = \QAut(P)$. This can be done using the fact that the center of a finite tree is fixed by all quantum automorphisms (which follows from the fact that quantum automorphisms preserve distances). Therefore, if the center is a vertex $v$, then $\QAut(P) = \QAut(v, P)$ while if the center is an edge, then by adding a vertex $\widetilde{v}$ in the middle, we get a new graph $\widetilde{P}$ such that $\QAut(\widetilde{v}, \widetilde{P}) = \QAut(P)$. We refer the reader to \cite[Sec 5]{dobben2023quantum} for details on that ``rootification'' operation.
\end{proof}

Here is a sample of the properties than one can deduce from this.

\begin{cor}\label{cor:almostsure}
Let $N\geqslant 5$ be an integer. The following properties are true for almost all trees $T$ :
\begin{enumerate}
\item $\QAut(T)$ contains $S_{N}^{+}$;
\item The full C*-algebra $C(\QAut(T))$ is not exact;
\item $\QAut(T)$ is not amenable;
\end{enumerate}
\end{cor}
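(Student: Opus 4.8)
The plan is to deduce each item from the previous theorem together with known structural results about the quantum symmetric groups $S_N^+$. The key observation is that, for each fixed $N\geqslant 5$, one can choose a concrete pattern $P$ whose rootification has quantum automorphism group containing $S_N^+$; then the preceding theorem guarantees that almost every tree contains $(v,P)$ for that choice of $v$, and hence that $\QAut(T)$ contains $\QAut(P)\supseteq S_N^+$. So the first task is to exhibit such a $P$: the natural candidate is the \emph{star} $K_{1,N}$ (one central vertex joined to $N$ leaves), whose automorphism group is $S_N$ and whose quantum automorphism group is $S_N^+$, with the center being the central vertex, so no rootification is needed. One must only check that $K_{1,N}$ genuinely has $\QAut = S_N^+$; this is classical (Banica), since the $N$ leaves are mutually non-adjacent and interchangeable, giving exactly the magic-unitary relations defining $S_N^+$.

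Granting item $(1)$, items $(2)$ and $(3)$ follow from the corresponding facts about $S_N^+$ and general permanence properties of quantum subgroups. Specifically, for $N\geqslant 5$ the discrete dual $\widehat{S_N^+}$ is non-amenable (Banica, and the fact that $S_N^+$ has the same fusion rules as $SO(3)$), and $C(S_N^+)$ is not exact (this is a theorem; non-exactness of the full C*-algebra propagates because exactness passes to C*-subalgebras, and $C(S_N^+)$ embeds as a quotient-then-subalgebra... one must be slightly careful here). The cleanest route for $(2)$ and $(3)$: if $\G$ contains $\HH$ as a quantum subgroup, then non-amenability of $\HH$ implies non-amenability of $\G$ (amenability passes to quantum subgroups), and likewise the surjection $C(\G)\twoheadrightarrow C(\HH)$ together with non-exactness of $C(\HH)$... actually exactness does \emph{not} obviously pass through quotients, so for $(2)$ I would instead invoke that $C(\HH)$ sits inside $C(\G)$ via the canonical embedding dual to the surjection — more precisely, use that $L^\infty(\G)$ contains $L^\infty(\HH)$ and non-exactness at the von Neumann / reduced level, or cite the precise statement from \cite{junk2020almost} or the literature that a quantum group containing $S_N^+$ ($N\geqslant 5$) has non-exact full C*-algebra. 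I would phrase it by reduction to $S_N^+$ and cite the relevant references rather than reprove anything.

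In more detail, the steps in order: (i) fix $N\geqslant 5$ and set $P = K_{1,N}$, noting $|V(P)| = N+1$ so $p = N$; (ii) observe $\mathrm{Aut}(K_{1,N}) = S_N$ acting on the leaves and the center of $K_{1,N}$ is its central vertex $v$, so by the rootification discussion $\QAut(P) = \QAut(v,P) = S_N^+$; (iii) apply the previous theorem with this pattern: almost surely $T$ contains $(v,P)$, hence $\QAut(T)\supseteq \QAut(v,P) = S_N^+$, proving $(1)$; (iv) for $(3)$, recall $S_N^+$ is non-amenable for $N\geqslant 5$ and that non-amenability is inherited by any quantum group containing it as a quantum subgroup, so $\QAut(T)$ is almost surely non-amenable; (v) for $(2)$, recall $C(S_N^+)$ is not exact for $N\geqslant 5$ and use the appropriate permanence statement (or cite \cite{junk2020almost}) to conclude $C(\QAut(T))$ is almost surely not exact. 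Since the three events (for the three properties) are each almost sure, their intersection is almost sure, giving the statement.

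The main obstacle is item $(2)$: unlike amenability, exactness of the \emph{full} C*-algebra is not transparently inherited when passing to an overgroup quantum group, because the relation between $C(\G)$ and $C(\HH)$ is a surjection $C(\G)\twoheadrightarrow C(\HH)$, and exactness need not descend along surjections in general. I expect the resolution to be one of: (a) cite the precise known result (it appears in the literature on $S_N^+$, e.g. via Banica–Vergnioux or the work cited in \cite{junk2020almost}) that containing $S_N^+$ forces non-exactness; or (b) work at the level of discrete duals, where $\widehat{S_N^+}\leqslant\widehat{\QAut(T)}$ as a \emph{subgroup} and non-exactness of the reduced C*-algebra of $\widehat{S_N^+}$ (hence weak containment failure) transfers upward. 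Either way, the substantive content is entirely contained in the theorem just proven plus off-the-shelf facts about $S_N^+$; no new hard analysis is needed, only careful bookkeeping of which C*-algebraic property passes in which direction, and a correct citation.
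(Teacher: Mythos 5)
Your proposal follows essentially the same route as the paper: the pattern is the star $K_{1,N}$ rooted at its central vertex, whose (rooted) quantum automorphism group is $S_{N}^{+}$, and the theorem of the previous section then gives a surjection $C(\QAut(T))\twoheadrightarrow C(S_{N}^{+})$ for almost all $T$, from which (2) and (3) follow using known facts about $S_{N}^{+}$. The one point you flag as the ``main obstacle'' --- whether non-exactness of $C(S_{N}^{+})$ can be pulled back through the surjection --- is in fact not an obstacle at all: quotients of exact C*-algebras \emph{are} exact (a deep theorem of Kirchberg, recorded as \cite[Cor 9.4.3]{brown2008finite}), so non-exactness of the quotient $C(S_{N}^{+})$ (which is \cite[Thm 3.8]{weber2012classification}) immediately forces non-exactness of $C(\QAut(T))$; none of your proposed workarounds via reduced C*-algebras or discrete duals is needed. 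A minor difference in bookkeeping: the paper deduces (3) from (2) via ``amenable $\Rightarrow$ nuclear $\Rightarrow$ exact,'' whereas you invoke heredity of (co)amenability under passage to quantum subgroups directly from the non-amenability of $S_{N}^{+}$; both arguments are valid.
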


\begin{proof}
\begin{enumerate}
\item Let $N\geqslant 5$ and observe that $S_{N}^{+}$ is the quantum automorphism group of the rooted tree with $N$ vertices attached to the root. Therefore, for almost all trees there is a surjective $*$-homomorphism $C(\QAut(T))\to C(S_{N}^{+})$.
\item Since every quotient of an exact C*-algebra is exact (see for instance \cite[Cor 9.4.3]{brown2008finite}) and since $C(S_{N}^{+})$ is not exact by \cite[Thm 3.8]{weber2012classification}, this follows from the first point.
\item If $\QAut(T)$ is amenable, then $C(\QAut(T))$ is nuclear, hence exact. This therefore follows from the second point.
\end{enumerate}
\end{proof}

While the authors were completing this work, the preprints \cite{dobben2023quantum} and \cite{meunier2023quantum} were issued. They contain a description of all the quantum automorphism groups of trees, and the vast majority of them satisfy the properties of Corollary \ref{cor:almostsure}. This does however not give an alternate proof of the said corollary. Indeed, even though it is known that $\QAut(T)$ is almost surely non-trivial, there are trees such that their quantum automorphism group is amenable. For instance the tree $T_{4}$ on five vertices with four leaves satisfies $\QAut(T_{4}) = S_{4}^{+}$, and by combining it with large trees having no quantum automorphism, one can produce arbitrarily large trees having amenable quantum automorphism group. It is therefore not clear from their results that such a phenomenon can only occur with asymptotic probability zero; however, our Corollary \ref{cor:almostsure} removes this ambiguity by proving that the phenomenon occurs with probability zero. 

\end{document}